\numberwithin{equation}{section}
\newtheorem{teo}{Theorem}[section]
\newtheorem{prop}[teo]{Proposition}
\newtheorem{lema}[teo]{Lemma}
\theoremstyle{definition}
\newtheorem{defi}[teo]{Definition}
\newtheorem{ej}[teo]{Example}
\theoremstyle{remark}
\title{The algebraicity of ill-distributed sets}
\author{Miguel N. Walsh}
\address{Departamento de Matemática, Facultad de Ciencias Exactas y Naturales, Universidad de Buenos Aires, 1428 Buenos Aires, Argentina}
\email{mwalsh@dm.uba.ar}
\thanks{The author was partially supported by a CONICET doctoral fellowship.}
\begin{document}

\def\Fqn{\mathbb{F}_q^n}
\def\Fq{\mathbb{F}_q}
\def\Di{\mathbb{D}}
\def\O{\mathcal{O}}
\def\modp{(\text{mod }p)}
\def\Zp{\mathbb{Z}_p}

\begin{abstract}
We show that every set $S \subseteq [N]^d$ occupying $\ll p^{\kappa}$ residue classes for some real number $0 \le \kappa <d$ and every prime $p$, must essentially lie in the solution set of a polynomial equation of degree $\ll (\log N)^C$, for some constant $C$ depending only on $\kappa$ and $d$. This provides the first structural result for arbitrary $\kappa<d$ and $S$.
\end{abstract}

\maketitle

\section{Introduction}

We are concerned with the distribution of subsets of $\mathbb{Z}^d$ in residue classes. Precisely, letting $N$ be an integer parameter going to infinity and writing $[N]^d = \left\{ 1 , \ldots, N \right\}^d$, we are interested in understanding the nature of those subsets $S \subseteq [N]^d$ that manifest irregular behavior in their distribution in residue classes. While we know that a random subset of $[N]^d$ will occupy $\gg p^d$ residue classes for most primes $p$ of reasonable size, it has recently emerged the possibility that the only way this random behavior may be altered is if the set possess some strong type of algebraic structure (see \cite{CL,G,Kb,W}, and in particular the work of Helfgott and Venkatesh \cite{HV}, for some discussions on this topic). This problem certainly belongs to the realm of inverse theorems in arithmetic combinatorics, where abnormal arithmetic behavior of an object is traced back to a precise type of structure in it (see for example \cite{BGT,GTZ,H,TV}). A particularly tantalizing aspect of this phenomenon are its potential application to sieve theoretical questions. 

By a strong algebraic structure, it is generally meant that the behavior of these special sets should be governed by some small polynomial, in the sense that (depending on the context) the set should essentially be either the image or the solution set of a small polynomial. To make this idea of smallness precise, we introduce the following definition.

\begin{defi}[Complexity]
We say a nonzero polynomial has complexity at most $C$ if it has degree at most $C$ and its coefficients are bounded by $N^C$. We will say a set $S \subseteq [N]^d$ has complexity at least $C$ if, for every $C'<C$, there is no polynomial of complexity at most $C'$ vanishing on this set.
\end{defi}

Results so far in the literature have shown either that some additional hypothesis on the size, or 'regularity', of an ill-distributed set implies a strong form of algebraic structure \cite{HV,W} (i.e. control by a polynomial of uniformly bounded complexity) or that an ill-distributed set manifesting properties antagonistic to strong algebraic structure must be small (see \cite{G}, the main result of which was improved by Bourgain, and subsequently by Green and Harper \cite{GH}). The purpose of this note is to show that \emph{every} strongly ill-distributed set must possess some form of algebraic structure. Precisely, we shall prove the following.

\begin{teo}
\label{main2}
Let $S \subseteq [N]^d$ occupy $\ll p^{\kappa}$ residue classes for every prime $p$ and some real number $0 \le \kappa<d$. Then, for every $\varepsilon>0$, there exists some nonzero $P \in \mathbb{Z}[x_1,\ldots,x_d]$ of complexity $\ll_{\kappa,d,\varepsilon} (\log N)^{\frac{\kappa}{d-\kappa}}$ vanishing on at least $(1-\varepsilon) |S|$ points of $S$.
\end{teo}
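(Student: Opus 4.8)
The plan is to produce $P$ by geometry of numbers together with the elementary fact that a nonzero rational integer has absolute value at least $1$. I would build a nonzero $P\in\mathbb{Z}[x_1,\dots,x_d]$ of small degree and small coefficients which vanishes modulo $p$ on the reduction of $S$ for every prime $p$ up to a threshold $y$; choosing $y$ so large that $\prod_{p\le y}p$ beats the trivial upper bound for $|P(s)|$ then forces $P(s)=0$ for every $s\in S$. This in fact yields a $P$ vanishing on \emph{all} of $S$, so the $\varepsilon$ in the statement is just slack.

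\textbf{Construction.} Fix a degree $D$ and set $R=\binom{D+d}{d}$, the number of monomials $x^e$ with $|e|\le D$; identify $P_v=\sum_{|e|\le D}v_ex^e$ with its coefficient vector $v\in\mathbb{Z}^R$. For a prime $p$ write $\bar S_p\subseteq\mathbb{F}_p^d$ for the reduction of $S$ and $V_p\subseteq\mathbb{F}_p^R$ for the space of $v$ such that $P_v$ vanishes on $\bar S_p$; as the kernel of evaluation at the points of $\bar S_p$, its codimension is $r_p:=\mathrm{codim}\,V_p\le|\bar S_p|\ll p^\kappa$. Let $Q=\prod_{p\le y}p$ and $\Lambda=\{v\in\mathbb{Z}^R:\ v\bmod p\in V_p\text{ for all }p\le y\}$. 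By the Chinese Remainder Theorem, $\Lambda$ is a full-rank sublattice of $\mathbb{Z}^R$ of index $\prod_{p\le y}p^{r_p}\le\exp(C\sum_{p\le y}p^\kappa\log p)\le\exp(C'y^{\kappa+1})$, the last step from $\theta(y)\ll y$. Applying pigeonhole to the at most $[\mathbb{Z}^R:\Lambda]$ cosets of $\Lambda$ meeting $\{0,\dots,H\}^R$ for a suitable $H\asymp[\mathbb{Z}^R:\Lambda]^{1/R}$ (equivalently, Minkowski's theorem) gives a nonzero $v\in\Lambda$ with $\|v\|_\infty\le\exp(C'y^{\kappa+1}/R)$.

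\textbf{Forcing vanishing, and the exponent.} For $s\in S$ and every $p\le y$: since $v\bmod p\in V_p$ and $s\bmod p\in\bar S_p$, we get $P_v(s)\equiv0\pmod p$, hence $Q\mid P_v(s)$. On the other hand $|P_v(s)|\le\|v\|_\infty RN^D\le\exp(C'y^{\kappa+1}/R+\log R+D\log N)$, whereas $Q=\exp(\theta(y))\gg\exp(y)$. So once we arrange $y^{\kappa+1}/R+D\log N\ll y$ we obtain $|P_v(s)|<Q$, forcing $P_v(s)=0$; thus $P_v$ is nonzero of degree $\le D$, with coefficients at most $\exp(C'y^{\kappa+1}/R)$, and vanishes on all of $S$. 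Choosing $y$ to be a large constant times $D\log N$, the condition becomes $R\gg y^\kappa\asymp(D\log N)^\kappa$; since $R\asymp_dD^d$ this reads $D^{d-\kappa}\gg(\log N)^\kappa$, which — and this is essentially where $\kappa<d$ enters — is satisfied by $D\asymp_{\kappa,d}(\log N)^{\kappa/(d-\kappa)}$ with a large enough implied constant. For this $D$ one has $y^{\kappa+1}/R\asymp y\asymp D\log N$, so the coefficient bound is $N^{O(D)}$ and the complexity of $P_v$ is $\ll_{\kappa,d}(\log N)^{\kappa/(d-\kappa)}$, as required (and with $\varepsilon=0$).

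\textbf{Main obstacle.} The crux is the simultaneous balancing above: $R=\binom{D+d}{d}$ must be big enough that, after dividing by $R$, the total codimension $\sum_{p\le y}r_p\asymp y^{\kappa+1}$ is still $o(y)$ — so the short vector, and hence the values $P_v(s)$, stay well below $Q$ — while $D$ must be small enough relative to $y/\log N$ that $Q=\exp(\theta(y))$ still dominates the crude size $N^D$ of those values. These two constraints pull oppositely and meet exactly at the exponent $\kappa/(d-\kappa)$. (A side remark: the bounded set of primes with $Cp^\kappa\ge p^d$, where $\bar S_p$ may fill all of $\mathbb{F}_p^d$, contributes only $O_C(1)$ to $\log[\mathbb{Z}^R:\Lambda]$ and is harmless.)
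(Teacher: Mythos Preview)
Your proof is correct and takes a genuinely different route from the paper. The paper works in two stages: first an averaging argument over tuples in $S^r$ (with $r\asymp(\log N)^{d\kappa/(d-\kappa)}$) locates a small \emph{characteristic} subset $\mathcal{C}\subseteq S$ such that a positive proportion of points of $S$ are congruent mod many primes (drawn from a fixed dyadic interval) to some element of $\mathcal{C}$; then Siegel's lemma is applied to $\mathcal{C}$ alone, producing a polynomial that a priori vanishes only on a positive fraction of $S$, and one iterates $O_\varepsilon(1)$ times. You bypass the characteristic-subset step entirely: you build $P$ directly via Minkowski on the CRT lattice of coefficient vectors vanishing on $\bar S_p$ for every $p\le y$, so that $\prod_{p\le y}p\mid P(s)$ is automatic for every $s\in S$, and $P$ vanishes on all of $S$ in one shot. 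Your approach is shorter and even gives $\varepsilon=0$; the paper's route, on the other hand, is tailored to prove the refinement (its Theorem~1.3) that only the ill-distribution hypothesis for primes in the single interval $[\tau(\log N)^{d/(d-\kappa)},2\tau(\log N)^{d/(d-\kappa)}]$ is needed --- though your lattice argument adapts to that setting as well, since restricting to primes in a dyadic range leaves both the index bound and the product $\prod_p p$ of the same order $\exp(\Theta(y))$.
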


This result is sharp up to the exponent, since for every choice of $0 \le \kappa < d$, there is some $c>0$ and a set $S \subseteq [N]^d$ satisfying the hypothesis of the result, but such that no polynomial of complexity at most $(\log N)^{c}$ vanishes on a positive proportion of this set (see Section \ref{examples}). Furthermore, if the presently known bounds towards the size of a maximal one dimensional ill-distributed sets were sharp (see \cite{Elsholtz}), then it is easy to see that the exponent $\frac{\kappa}{d-\kappa}$ would in fact be optimal for every $\kappa$ (see Example \ref{products}), although we do not expect this to be the case. In particular, when $d=1$, Theorem \ref{main2} recovers the best known bound on the size of strongly ill-distributed sets. 

It should be noticed that in contrast to previous results in the literature, we do not require $\kappa$ to be an integer, thus yielding the first strucutural result for arbitrary real values of this parameter.

We shall actually prove the following refinement of Theorem \ref{main2}, which shows that it suffices to know that the set $S$ is badly distributed in residue classes with respect to very small prime moduli.

\begin{teo}
\label{specific}
For every positive integer $d$, and real $0 \le \kappa < d$, there exists $\tau=\tau(d,\kappa)>0$ such that the following holds. Write $\mathcal{P}_I$ for the primes in the interval
\begin{equation}
\label{interval}
 I=\left[ \tau(\log N)^{\frac{d}{d-\kappa}}, 2 \tau(\log N)^{\frac{d}{d-\kappa}} \right].
 \end{equation}
Then, for every $S \subseteq [N]^d$ occupying $\ll p^{\kappa}$ residue classes mod $p$ for every prime $p \in \mathcal{P}_I$, and every $\varepsilon>0$, there exists some nonzero $P \in \mathbb{Z}[x_1,\ldots,x_d]$ of complexity $\ll_{\kappa,d,\varepsilon} (\log N)^{\frac{\kappa}{d-\kappa}}$ vanishing on at least $(1-\varepsilon) |S|$ points of $S$.
\end{teo}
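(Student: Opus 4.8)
The plan is to run a Siegel's lemma (Minkowski) argument in the space of polynomials of degree at most $D\asymp_{d,\kappa}(\log N)^{\kappa/(d-\kappa)}$. The point of placing the interval $I$ at height $(\log N)^{d/(d-\kappa)}$ is precisely that $p^{\kappa}$ is then comparable to the dimension of this space for $p\in I$, while the modulus $m=\prod_{p\in\mathcal{P}_I}p$ is forced to be so large that any polynomial divisible by $m$ at every point of $[N]^d$ must vanish identically there.

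First I would fix $D=\lceil c\,\tau(\log N)^{\kappa/(d-\kappa)}\rceil$ for a small constant $c$ to be chosen below, and let $V_D\cong\mathbb{Z}^{r}$ be the lattice of polynomials in $\mathbb{Z}[x_1,\dots,x_d]$ of degree at most $D$, so $r=\binom{D+d}{d}\asymp_{d}(c\tau)^{d}(\log N)^{d\kappa/(d-\kappa)}$. For $p\in\mathcal{P}_I$ write $Z_p=\pi_p(S)\subseteq\mathbb{F}_p^{d}$ for the image of $S$ mod $p$, so $|Z_p|\ll p^{\kappa}$ by hypothesis, and set $L_p=\{Q\in V_D:\,p\mid Q(s)\text{ for all }s\in S\}$. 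Since $L_p$ is the kernel of the composition $V_D\to\mathbb{F}_p^{r}\to\mathbb{F}_p^{Z_p}$ of reduction with evaluation on $Z_p$, and the second map has rank at most $|Z_p|$, we get $[V_D:L_p]\le p^{|Z_p|}$. Putting $L=\bigcap_{p\in\mathcal{P}_I}L_p$, Chebyshev's estimate $\sum_{p\le x}\log p\ll x$ yields
\[
\log[V_D:L]\;\le\;\sum_{p\in\mathcal{P}_I}|Z_p|\log p\;\ll\;\Big(\max_{p\in I}p^{\kappa}\Big)\!\!\sum_{p\le 2\tau(\log N)^{d/(d-\kappa)}}\!\!\log p\;\ll\;\tau^{1+\kappa}(\log N)^{\frac{d(1+\kappa)}{d-\kappa}}.
\]

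By Minkowski's first theorem, the cube $[-B,B]^{r}$ with $B=[V_D:L]^{1/r}$ contains a nonzero $P\in L$, and hence
\[
\log\|P\|_\infty\;\le\;\frac1r\log[V_D:L]\;\ll_d\;\frac{\tau^{1+\kappa}(\log N)^{\frac{d(1+\kappa)}{d-\kappa}}}{(c\tau)^{d}(\log N)^{\frac{d\kappa}{d-\kappa}}}\;=\;c^{-d}\,\tau^{1+\kappa-d}\,(\log N)^{\frac{d}{d-\kappa}}.
\]
Dividing by $\log N$ shows the coefficients of $P$ are bounded by $N^{C}$ with $C\ll_{d,\kappa}(\log N)^{\kappa/(d-\kappa)}$, and since $\deg P\le D\ll_{d,\kappa}(\log N)^{\kappa/(d-\kappa)}$, the polynomial $P$ has complexity $\ll_{d,\kappa}(\log N)^{\kappa/(d-\kappa)}$ once $c,\tau$ are fixed. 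This is exactly where the two exponents match: the numerator's $(\log N)^{d(1+\kappa)/(d-\kappa)}$ is cancelled by $r\asymp(\log N)^{d\kappa/(d-\kappa)}$ down to $(\log N)^{d/(d-\kappa)}$, i.e.\ to $(\log N)^{\kappa/(d-\kappa)}$ after dividing by $\log N$, which is why the location of $I$ is chosen at height $(\log N)^{d/(d-\kappa)}$.

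Finally, by construction $p\mid P(s)$ for every $s\in S$ and $p\in\mathcal{P}_I$, so $m\mid P(s)$ for all $s\in S$, where $\log m=\sum_{p\in\mathcal{P}_I}\log p\gg\tau(\log N)^{d/(d-\kappa)}$ by the prime number theorem. On the other hand $|P(s)|\le r\|P\|_\infty N^{D}$ for $s\in[N]^{d}$, so
\[
\log|P(s)|\;\le\;\log r+\log\|P\|_\infty+D\log N\;\le\;\Big(O_d\!\big(c^{-d}\tau^{1+\kappa-d}\big)+c\tau+o(1)\Big)(\log N)^{\frac{d}{d-\kappa}}.
\]
Since $\kappa<d$, the quantity $\tau^{1+\kappa-d}$ has order strictly below $\tau$; choosing first $c$ small and then $\tau=\tau(d,\kappa)$ large we obtain $\log|P(s)|<\log m$ for all $s\in[N]^{d}$ once $N$ is large. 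Then $m\mid P(s)$ and $|P(s)|<m$ force $P(s)=0$, so $P$ vanishes on all of $S$, which is even stronger than the asserted vanishing on $(1-\varepsilon)|S|$ points; the finitely many small $N$ are handled trivially, e.g.\ by a product of linear forms. The main obstacle is precisely this last balancing: both $m$ and $\sup_{[N]^{d}}|P|$ are of size $\exp\!\big(\Theta((\log N)^{d/(d-\kappa)})\big)$, and the whole argument reduces to tuning the interval-length parameter $\tau$ and the degree parameter $c$ so that the modulus dominates — which, as the displays show, is possible exactly because $\kappa<d$.
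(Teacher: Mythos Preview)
Your proof is correct and takes a genuinely different route from the paper's. The paper proceeds by a \emph{random sampling} argument: it shows (Proposition~3.1) that a random $r$-tuple $\mathcal{C}\in S^r$ with $r\asymp(\log N)^{d\kappa/(d-\kappa)}$ will, with positive probability, have the property that a positive proportion $S'\subseteq S$ of points are congruent to some element of $\mathcal{C}$ modulo many primes in $\mathcal{P}_I$. It then applies Siegel's lemma (pigeonhole form) to $\mathcal{C}$ alone to produce a polynomial of complexity $\ll r^{1/d}$ vanishing on $\mathcal{C}$, and deduces that it vanishes on $S'$ by the size-versus-divisibility tension. This yields vanishing on only a positive proportion of $S$ per step, so the full statement requires $O_\varepsilon(1)$ iterations.

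You bypass the sampling entirely: instead of picking a characteristic subset, you work directly with the sublattice $L=\bigcap_{p\in\mathcal{P}_I}L_p\subseteq V_D$ of polynomials vanishing modulo every $p$ on \emph{all} of $\pi_p(S)$, bound its index via $[V_D:L_p]\le p^{|Z_p|}$, and invoke Minkowski to extract a short vector. This is cleaner in two ways: it avoids the probabilistic averaging of Lemma~3.1 and Proposition~3.1, and it gives $P$ vanishing on all of $S$ in one shot, so no iteration and no dependence on $\varepsilon$ is needed. The trade-off is that the paper's characteristic-set viewpoint connects more transparently to the author's earlier inductive approach in~[W], whereas your lattice argument is closer in spirit to the classical Thue--Siegel--Bombieri--Pila auxiliary polynomial method. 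The underlying arithmetic reason both arguments work is identical: $\prod_{p\in\mathcal{P}_I}p$ exceeds $\sup_{[N]^d}|P|$ precisely when $\kappa<d$, after balancing the degree and interval-height parameters.
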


Clearly, this result is optimal, as can be seen from taking an arbitrary subset of $[N]^d$ of size $(\log N)^{\frac{d\kappa}{d-\kappa}}$ and applying Siegel's lemma (Lemma \ref{siegel}). Of course, the strength of this result lies in the assertion that there is no construction that can beat this bound.

As in \cite{W}, the strategy of proof will consist in finding a small 'characteristic' subset $\mathcal{C} \subseteq S$, such that if a polynomial of low complexity vanishes on this set, then it must necessarily vanish at a positive proportion of the points of $S$, and then use the smallness of $\mathcal{C}$ to guarantee the existence of such a polynomial. However, contrary to \cite{W}, where the construction of the characteristic subset required a rather delicate induction and the study of the 'genericity' of the fibers of the set, we rely instead on a random sampling argument. The reason we can do this here is because, contrary to \cite{W}, the size of $\mathcal{C}$ may be allowed to go to infinity with $N$ due to the weaker kind of structure we are trying to attain, thus allowing much more flexibility in our choice. As remarked before, a particular advantage of this is that it allows us to deal with all ill-distributed sets in a uniform way.

The rest of this note is organized as follows. In Section \ref{examples} we present several examples of ill-distributed sets that help understand our results. The proof of Theorem \ref{specific}, after some preliminary discussion of notation, is carried out in Section \ref{pf}.

\medskip
\noindent \emph{Acknowledgments.} That something like Theorem \ref{main2} should hold for integer values of $\kappa$ was conjectured by Ben Green. I would like to thank him and Harald Helfgott for some comments and discussions around these topics.

\section{Some examples of ill-distributed sets}
\label{examples}

In this section we give some examples of ill-distributed sets, including some of relatively high algebraic complexity. In particular, we will show that for every $0 \le \kappa < d$ there exists $c>0$ and sets satisfying the hypothesis of Theorem \ref{main2} such that every polynomial vanishing on these sets must have complexity $\gg (\log N)^c$. We will also include an easy example to the effect that any improvement to the exponent $\frac{\kappa}{d-\kappa}$ in Theorem \ref{main2} must necessarily involve a better understanding of the size of ill-distributed one-dimensional sets.

\begin{ej}[One-dimensional sets]
\label{one} 
We begin with the one-dimensional case, where for every $0 < \epsilon < 1$ it is easy to construct subsets of $[N]$ of size $\gg (\log N)^{\epsilon}$ (and therefore complexity at least $\gg (\log N)^{\epsilon}$) occupying less than $p^{\epsilon}$ residue classes for every prime $p$. Indeed, by the Chinese Remainder Theorem we know that there exists subsets of $[N]$ of size $\gg N^{\epsilon}$ occupying less than $p^{\epsilon}$ residue classes for every prime $p<\log N$. Since any choice of $\lfloor (\log N)^{\epsilon} \rfloor$ elements of this set will necessarily occupy less than $p^{\epsilon}$ residue classes for every prime $p \ge \log N$, the claim follows. In general, we know from the larger sieve that such a set may have size at most $\ll (\log N)^{\frac{\epsilon}{1-\epsilon}}$ (see \cite{Elsholtz}). Notice that this result also follows from Theorem \ref{main2}. 
\end{ej}

\begin{ej}[Algebraic sets of bounded complexity]
By the Lang-Weil inequality, the solution set of a nonzero polynomial $P \in \mathbb{Z}[x_1,\ldots,x_d]$ of bounded complexity occupies $O(p^{d-1})$ residue classes for almost every prime $p$.
\end{ej} 

\begin{ej}[Product sets]
\label{products}
When $d \ge 2$, the most obvious example of a set occupying $p^{\kappa}$ residue classes for some $\kappa<d$, which is not dominated by a polynomial of bounded complexity, arises from considering product sets containing small ill-distributed sets. For example, when $d=2$ we may consider the product set $S=[N] \times X$, where $X \subseteq [N]$ is a small set occupying $\ll p^{\epsilon}$ residue classes for some $0<\epsilon<1$ and every prime $p$. From the first example we know that such a set $X$ may be constructed to have size $\gg (\log N)^{\epsilon}$, forcing any polynomial that vanishes on $S$ to have complexity at least $(\log N)^{\epsilon}$. However, as it was mentioned before, the best bound known for the size of such a set $X$ is $\ll (\log N)^{\frac{\epsilon}{1-\epsilon}}$. Suppose then that there exists some $X$ of the above form of size $|X| \sim (\log N)^{\frac{\epsilon}{1-\epsilon}}$ and consider the product set $X^d=X \times \ldots \times X \subseteq [N]^d$. Then this set occupies at most $p^{\kappa}$ residue classes for every prime $p$, with $\kappa=d \epsilon$, while every polynomial vanishing on this set must have complexity at least $|X| \gg (\log N)^{\frac{\epsilon}{1-\epsilon}}$. Since $\frac{\epsilon}{1-\epsilon}=\frac{\kappa}{d-\kappa}$, this would make Theorem \ref{main2} sharp for every value of $\kappa$. This shows that any improvement to the exponent in Theorem \ref{main2} must necessarily involve a better understanding of these types of sets.
\end{ej}

\begin{ej}['Perturbations' of algebraic sets]
We now show that products are not the only way to construct high-dimensional ill-distributed sets out of lower-dimensional ones. Indeed, let $Y \subseteq [N]$ be a set occupying $\ll p^{\varepsilon}$ residue classes for every prime $p$ and let $f \in \mathbb{Z}[x]$ have complexity at most $d$. Then we see that the set
$$ \left\{ (x,f(x)y) : x \in [N], y \in Y \right\},$$
occupies $\ll p^{1+\varepsilon}$ residue classes for every prime $p$, but every polynomial vanishing on this set has complexity at least $\gg_d |Y|$ (which we may take to be $\gg_d (\log N)^{\varepsilon}$ by Example \ref{one}).
\end{ej}

\begin{ej}
Since polynomials of rather large complexity may still possess a growing number of points, it is possible to construct pathological sets as before of larger algebraic complexity. We show an example of this. Fix a pair $0 < \delta, \rho < 1$ and let $d=(\log N)^{1-\delta}$. Consider the polynomial $f(x)=N^{\rho}x^d$, whose graph we know to have $\sim (N)^{\frac{1-\rho}{d}} = \exp((1-\rho)(\log N)^{\delta})$ integer points in $[N]^2$. Let us note that the only purpose of taking $\rho>0$ is to illustrate in a trivial manner how the complexity may also increase in the coefficients of the polynomials involved. Now let $Z$ be a set of size $\gg (\log N)^{\epsilon}$ occupying less than $p^{\epsilon}$ classes for every prime $p$ (constructed as in Example \ref{one}). Then it is clear that the set
$$ \left\{ (x,f(x)+z) : x \in [N], z \in Z \right\},$$
occupies less than $p^{\kappa}$ residue classes for every prime $p$, with $\kappa=1+\epsilon$. However, it is not hard to see (using the size of the graph of $f$ and the irreducibility of $f(x)+z-y$ as a polynomial in $x,y$, for every $z \in Z$) that if $c<\kappa-\delta$, then there is no polynomial of complexity at most $(\log N)^{c}$ vanishing on this set. Since $\delta>0$ could be chosen arbitrarily small, we see that the exponent can be taken arbitrarily close to $\kappa$.
\end{ej}

\section{The proof of Theorem 1.3}
\label{pf}

\subsection{Notation}

We will let $N$ be an integer parameter going to infinity. We will use the usual asymptotic notation $\ll, O$, where in particular the implicit constants are not allowed to depend on $N$. Any dependence of these implicit constants on the parameters $\kappa$ and $d$ present in the statement of Theorem \ref{specific} will be indicated by a subscript (e.g. $X \ll_{\kappa,d} Y$). Given a statement $Q(x)$ with respect to an element $x \in [N]^d$, we will denote by ${\bf 1}_{Q(x)}$ the function that equals $1$ when $Q(x)$ is true, and vanishes otherwise. We will also write $\Zp := \mathbb{Z}/p \mathbb{Z}$.

\subsection{Proof of Theorem 1.3}

The remaining part of this note is devoted to the proof of Theorem \ref{specific}. Precisely, we will show the existence of some absolute constant $\delta > 0$ such that, for every set $S$ as in the statement of Theorem \ref{main2}, there exists some nonzero $P \in \mathbb{Z}[x_1,\ldots,x_d]$ of complexity $\ll_{\kappa,d} (\log N)^{\frac{\kappa}{d-\kappa}}$ vanishing on at least $\delta |S|$ points of $S$. It is clear that Theorem \ref{main2} follows upon $O_{\varepsilon}(1)$ iterations of this result. From now on, let all the hypothesis and notations be as in the statement of that theorem.

As in \cite{W}, the main idea of the proof will be to construct a 'characteristic' subset $\mathcal{C}$ of $S$. By this we mean a set $\mathcal{C} \subseteq S$ such that if a polynomial of low complexity vanishes at $\mathcal{C}$, then it must also vanish at a positive proportion of $S$. Since we will also guarantee this set $\mathcal{C}$ to be small, we will be able to realize $\mathcal{C}$ as the solution set of a low complexity polynomial (by an application of Siegel's lemma), from where the result will follow.

The reason that allows us to find such a characteristic subset $\mathcal{C} \subseteq S$ is that, since $S$ occupies few residue classes for most primes $p$, it shall be possible to find a small set $\mathcal{C}$ such that for many elements $s \in S$, there are many primes $p$ for which $s \equiv c \, \modp$ for some $c \in \mathcal{C}$. But if we are then given a polynomial $P$ that vanishes on $\mathcal{C}$, and since polynomials descend to congruence classes, it follows that the above set of primes divides $P(s)$. If this polynomial $P$ is of bounded complexity, $|P(s)|$ must be small, and the only way this can be compatible with many primes dividing it is if $P(s)=0$.

From now on, let 
\begin{equation}
\label{r}
r = \eta (\log N)^{\frac{d\kappa}{d-\kappa}},
\end{equation}
be a parameter with the choice of the constant $\eta$ to be specified later. This parameter $r$ will be the size of the characteristic subset $\mathcal{C} \subseteq S$ that we shall construct below.

We will restrict considerations to the set of primes $\mathcal{P}_I$, which we recall, consists of those primes lying in the interval 
\begin{equation}
\label{tau}
I=[\tau(\log N)^{\frac{d}{d-\kappa}},2\tau(\log N)^{\frac{d}{d-\kappa}}],
\end{equation}
where $\tau>0$ is a constant that will be chosen later. 

Since we are interested in how well general elements of $S$ may be represented mod $p$ by fixed elements of $S^r$, we will be considering the set of pairs $(x,L) \in S \times S^r$. We say a tuple $(x,L) \in S \times S^r$ is {\sl good} mod $p$ if the residue class of $x$ mod $p$ is also the residue class of some element in $L$ and we say it is {\sl bad} otherwise. Our goal is to show that there exists some $\mathcal{C} \in S^r$ such that for a positive proportion of $x \in S$, the tuple $(x,\mathcal{C})$ is good mod $p$ for many primes $p \in \mathcal{P}_I$. This is summarized in the following proposition.

\begin{prop}
\label{tuple}
There exist sets $\mathcal{C},S' \subseteq S$ of size $|\mathcal{C}|=r$, $|S'| \gg |S|$, such that for every $x \in S'$ we have
\begin{equation}
\label{ema}
\sum_{p \in \mathcal{P}_I} {\bf 1}_{\exists c \in \mathcal{C} : x \equiv c \modp} \log p \gg |I|.
\end{equation}
\end{prop}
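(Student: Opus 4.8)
The plan is to build $\mathcal{C}$ by the random sampling argument announced in the introduction. I would let $c_1,\dots,c_r$ be independent uniform random elements of $S$ and set $\mathcal{C}_0=\{c_1,\dots,c_r\}$. For a prime $p$ and $x\in S$ write $\nu_p(x)=\#\{s\in S:s\equiv x\ \modp\}$, and call $x$ \emph{popular} mod $p$ if $\nu_p(x)\ge c_0|S|p^{-\kappa}$, where $c_0>0$ is a small constant to be fixed. The point of the hypothesis is that for each fixed $p\in\mathcal{P}_I$, all but an $O(c_0)$-proportion of $S$ is popular mod $p$: since $S$ meets $\ll p^{\kappa}$ residue classes mod $p$, the classes of size $<c_0|S|p^{-\kappa}$ number $\ll p^{\kappa}$ and so contain at most $\ll c_0|S|$ points in total, and these are exactly the non-popular points. (Cauchy--Schwarz, which gives $\sum_{x\in S}\nu_p(x)\gg|S|^2p^{-\kappa}$, shows that this is the natural threshold.)

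The second ingredient is the local estimate at a single prime. If $x$ is popular mod $p$, then a uniform $c\in S$ satisfies $c\equiv x\ \modp$ with probability $\nu_p(x)/|S|\ge c_0p^{-\kappa}$, so the tuple $(x,\mathcal{C}_0)$ is bad mod $p$ with probability at most $(1-c_0p^{-\kappa})^r\le\exp(-c_0rp^{-\kappa})$. Here the two scales meet: for $p\in\mathcal{P}_I$ we have $p^{\kappa}\asymp\tau^{\kappa}(\log N)^{\frac{d\kappa}{d-\kappa}}$ whereas $r=\eta(\log N)^{\frac{d\kappa}{d-\kappa}}$, so $rp^{-\kappa}\gg_{\tau}\eta$ and this probability is at most some $\varepsilon_1=\varepsilon_1(\eta,\tau,\kappa)$ tending to $0$ as $\eta\to\infty$.

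Now I would globalize by a first-moment argument. Summing the weighted badness over all $x\in S$, taking expectations over $\mathcal{C}_0$, and splitting the inner sum for each $p$ into popular points (each contributing probability $\le\varepsilon_1$) and the $\ll c_0|S|$ non-popular points (probability $\le1$), one gets
\begin{equation*}
\mathbb{E}\sum_{x\in S}\sum_{p\in\mathcal{P}_I}{\bf 1}_{(x,\mathcal{C}_0)\text{ bad mod }p}\log p\ \ll\ (c_0+\varepsilon_1)\,|S|\sum_{p\in\mathcal{P}_I}\log p.
\end{equation*}
Since $I$ has length tending to infinity while $\log(\max I)=O(\log\log N)$, the prime number theorem gives $\sum_{p\in\mathcal{P}_I}\log p\sim|I|$. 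Taking $c_0$ small in terms of the implied constant in ``$\ll p^{\kappa}$'' and then $\eta$ large, the right-hand side is $\le\tfrac12|S|\sum_{p\in\mathcal{P}_I}\log p$, so some realization $\mathcal{C}$ of $\mathcal{C}_0$ attains at most this; by Markov, at most $\tfrac12|S|$ points $x\in S$ can then have weighted badness exceeding $\tfrac12\sum_{p\in\mathcal{P}_I}\log p$. Letting $S'$ be the rest, $|S'|\ge\tfrac12|S|\gg|S|$, and for $x\in S'$,
\begin{equation*}
\sum_{p\in\mathcal{P}_I}{\bf 1}_{\exists c\in\mathcal{C}\,:\,x\equiv c\ \modp}\log p\ =\ \sum_{p\in\mathcal{P}_I}\log p-\sum_{p\in\mathcal{P}_I}{\bf 1}_{(x,\mathcal{C})\text{ bad mod }p}\log p\ \ge\ \tfrac12\sum_{p\in\mathcal{P}_I}\log p\ \gg\ |I|.
\end{equation*}
If $\mathcal{C}$ has fewer than $r$ distinct elements I would pad it with arbitrary points of $S$ up to size exactly $r$, which only increases the left-hand side.

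The only real difficulty is organizational: the constants must be fixed in the order $\tau$ (it is part of the statement of Theorem \ref{specific}, and for this proposition only needs to be a fixed positive number, its smallness being exploited later), then $c_0$ small in terms of the implied constant of the hypothesis, then $\eta$ large in terms of $c_0,\tau,\kappa$ so that $\varepsilon_1$ is negligible. The one substantive step is the popularity argument of the first paragraph, which converts the combinatorial hypothesis that $S$ occupies $\ll p^{\kappa}$ classes into the probabilistic statement that a random element of $S$ lies in the residue class of a typical $x$ with probability $\gg p^{-\kappa}$; everything after that is routine first-moment estimation together with the prime number theorem on the comfortably long interval $I$.
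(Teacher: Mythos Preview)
Your proof is correct and follows essentially the same random-sampling strategy as the paper. The paper's Lemma~\ref{good} splits residue classes by the threshold $S_a \lessgtr 1/r$ (equivalent to your popularity threshold $c_0 p^{-\kappa}$ since $p^\kappa \asymp r$ for $p\in\mathcal{P}_I$) and then extracts $\mathcal{C}$ and $S'$ via two successive pigeonhole steps rather than your single first-moment bound on the weighted badness sum followed by Markov, but the underlying computation is the same.
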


We begin by showing that, because of our choice of $r$, a positive proportion of the tuples of $S \times S^r$ are good.

\begin{lema}
\label{good}
There exists $c > 0$ such that, for every prime $p \in \mathcal{P}_I$, there are at least $c |S|^{r+1}$ good mod $p$ tuples in $S \times S^r$.
\end{lema}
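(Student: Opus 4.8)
The plan is to show that a random tuple $(x,L)$, drawn by choosing $x \in S$ uniformly and $L \in S^r$ uniformly (i.e. $r$ independent uniform choices from $S$), is good mod $p$ with probability $\gg 1$. Since a tuple $(x,L)$ is good mod $p$ precisely when at least one of the $r$ coordinates of $L$ lies in the same residue class mod $p$ as $x$, it is cleanest to bound the probability of the complementary (bad) event, namely that none of the $r$ coordinates of $L$ collides with $x$ mod $p$. Writing $a_1(p), \ldots, a_{k}(p)$ for the sizes of the fibres of the reduction map $S \to \Zp^d$ (so $\sum_i a_i(p) = |S|$ and, by hypothesis, $k \ll p^{\kappa}$), the probability that a single uniformly chosen element of $S$ avoids the residue class of $x$ is $1 - a_{i(x)}(p)/|S|$, where $a_{i(x)}(p)$ is the size of the fibre containing $x$. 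Hence
\begin{equation}
\label{eq:badprob}
\mathbb{P}\big[(x,L) \text{ bad mod } p\big] \;=\; \frac{1}{|S|}\sum_{x \in S} \left( 1 - \frac{a_{i(x)}(p)}{|S|}\right)^{r} \;=\; \frac{1}{|S|}\sum_{i} a_i(p)\left(1 - \frac{a_i(p)}{|S|}\right)^{r}.
\end{equation}

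The point is now to bound the right-hand side of \eqref{eq:badprob} away from $1$. Using $1 - t \le e^{-t}$, the $i$-th summand is at most $a_i(p)\,e^{-r a_i(p)/|S|}$, so the sum is at most $\sum_i a_i(p) e^{-r a_i(p)/|S|}$. The function $u \mapsto u\,e^{-ru/|S|}$ is maximized at $u = |S|/r$, giving the crude bound $a_i(p)e^{-ra_i(p)/|S|} \le |S|/(er)$ for each $i$; summing over the $k \ll p^{\kappa}$ values of $i$ yields
\begin{equation}
\label{eq:crude}
\mathbb{P}\big[(x,L)\text{ bad mod }p\big] \;\ll\; \frac{k}{|S|}\cdot\frac{|S|}{er} \;\ll\; \frac{p^{\kappa}}{r}.
\end{equation}
For $p \in \mathcal{P}_I$ we have $p \ll \tau (\log N)^{\frac{d}{d-\kappa}}$, so $p^{\kappa} \ll \tau^{\kappa}(\log N)^{\frac{d\kappa}{d-\kappa}}$; recalling from \eqref{r} that $r = \eta(\log N)^{\frac{d\kappa}{d-\kappa}}$, the bound \eqref{eq:crude} becomes $\ll_{\kappa,d} \tau^{\kappa}/\eta$. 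Choosing $\eta$ sufficiently large relative to $\tau$ (both are still free at this stage; $\tau$ will be pinned down later from other constraints, but this argument only needs $\eta/\tau^{\kappa}$ large), this probability is at most $\tfrac12$, say, so the probability that $(x,L)$ is good mod $p$ is at least $\tfrac12$. Averaging back, the number of good mod $p$ tuples in $S \times S^r$ is at least $\tfrac12 |S|^{r+1}$, which gives the claim with $c = \tfrac12$ (uniformly over $p \in \mathcal{P}_I$, since the bound \eqref{eq:crude} was uniform over such $p$).

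The main obstacle — really the only subtlety — is the interplay of constants: the bound we get is of the shape $p^{\kappa}/r$, which is $\ll \tau^{\kappa}/\eta$ on $\mathcal{P}_I$, so we must be sure that the eventual choices of $\tau$ and $\eta$ made later in the proof (driven by Proposition \ref{tuple} and the final application of Siegel's lemma) are consistent with $\eta$ being allowed to be taken large in terms of $\tau$. This is a matter of bookkeeping rather than genuine difficulty: one simply fixes $\tau$ first according to the downstream requirements and then takes $\eta$ large enough. A minor point worth noting is that the extremal bound $a_i(p)e^{-ra_i(p)/|S|}\le |S|/(er)$ is wasteful when the fibres are very unequal, but since we only need a constant lower bound on the proportion of good tuples, the crude estimate suffices and there is no need to exploit any finer structure of the fibre sizes.
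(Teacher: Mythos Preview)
Your argument is correct and follows essentially the same approach as the paper: both compute the probability that a uniformly random tuple $(x,L)\in S\times S^r$ is bad mod $p$ and bound it away from $1$, leading to the same constraint $\eta \gg \tau^{\kappa}$ (the paper records this as $\eta \ge C_1\tau^{\kappa}$). The only technical difference is that the paper splits the sum over residue classes according to whether $S_a\le 1/r$ or $S_a>1/r$, obtaining a bad-probability bound of $e^{-1}+O(p^{\kappa}/r)$, whereas you bound each term by the maximum of $u\,e^{-ru/|S|}$ to get the cleaner (and nominally stronger) bound $O(p^{\kappa}/r)$; either suffices.

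One small caveat on your bookkeeping remark: you cannot literally ``fix $\tau$ first according to the downstream requirements and then take $\eta$ large enough,'' because the downstream requirement in the paper is $\tau \ge C_2\eta^{1/d}$, which itself grows with $\eta$. The correct resolution (which the paper carries out) is to note that the two constraints $\eta \ge C_1\tau^{\kappa}$ and $\tau \ge C_2\eta^{1/d}$ are simultaneously satisfiable precisely because $\kappa<d$; this does not affect the proof of the lemma itself, but your description of how the constants are chosen is slightly off.
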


\begin{proof}
For a residue class $a (\text{mod }p)$ in $\mathbb{Z}_p^d$ write $S_a$ for the probability of an element of $S$ lying in $a$ (that is, the proportion of elements of $S$ lying in this class). Then the probability of a tuple being bad equals
\begin{equation}
\label{prob}
 \sum_{a \in \mathbb{Z}_p^d} S_a \left( 1-S_a \right)^r.
 \end{equation}
It will suffice for our purposes that this sum be bounded away from $1$. Consider first those residue classes for which $S_a \le 1/r$. Then the sum (\ref{prob}) restricted to these classes is $O(p^{\kappa}/r)$ (since $S$ occupies $O( p^{\kappa})$ residue classes). Since $p \in \mathcal{P}_I$, we can make this arbitrarily small upon forcing the constants $\eta$ in (\ref{r}) and $\tau$ in (\ref{tau}) to satisfy
\begin{equation}
\label{cond1}
\eta \ge C_1 \tau^{\kappa},
\end{equation}
for some large value of $C_1$. The sum over the remaining classes is bounded by
$$ \left( 1 - \frac{1}{r} \right)^r \sum_{a \in \mathbb{Z}_p^d} S_a = \left( 1 - \frac{1}{r} \right)^r \rightarrow e^{-1},$$
as $r \rightarrow \infty$. Thus, letting $N$ be sufficiently large, we obtain the desired result. 
\end{proof}

\begin{proof}[Proof of Proposition \ref{tuple}]
Lemma \ref{good} clearly implies that for every $p \in \mathcal{P}_I$ there exist absolute constants $c_1$ and $c_2$ (independent of $p$) such that for at least $c_1|S|^r$ choices of $L \in S^r$, there are at least $c_2|S|$ elements $x \in S$ for which $(x,L)$ is a good mod $p$ tuple. Indeed, for this to fail we need
$$ c_1 |S|^{r+1}+(1-c_1)c_2 |S|^{r+1} \ge c |S|^{r+1},$$
and this clearly can be contradicted upon choosing the constants sufficiently small.

We say an element $L \in S^r$ is good mod $p$ if $(x,L)$ is good mod $p$ for at least $c_2|S|$ elements $x \in S$. By the above observations we know that
$$ \sum_{p \in \mathcal{P}_I} \left| \left\{ L \in S^r : L \text{ is good mod }p \right\} \right| \ge c_1 |S|^r |\mathcal{P}_I|.$$
It follows that there must exist some $\mathcal{C} \in S^r$ that is good mod $p$ for at least $c_1 |\mathcal{P}_I|$ primes in $\mathcal{P}_I$. From now on fix such a choice of $\mathcal{C}$.

By construction we know that
$$ \sum_{x \in S} \left| \left\{ p \in \mathcal{P}_I : (x,\mathcal{C}) \text{ is good mod }p \right\} \right| \ge c_1 c_2 |S||\mathcal{P}_I|.$$
But now it follows from this that there exist constants $c_3,c_4 > 0$ and a subset $S' \subseteq S$ of size $|S'| \ge c_3 |S|$, such that for every $x \in S'$ there are at least $c_4 |\mathcal{P}_I|$ primes $p \in \mathcal{P}_I$ for which $(x,\mathcal{C})$ is good mod $p$. In particular, we have that for every $x \in S'$ it must be
\begin{equation}
\label{cont}
\sum_{p \in \mathcal{P}_I} {\bf 1}_{\exists c \in \mathcal{C} : x \equiv c \modp} \log p \gg \frac{|I|}{\log |I|}\log |I| \gg |I|,
\end{equation}
and this completes the proof of Proposition \ref{tuple}
\end{proof}

In order to realize the set $\mathcal{C}$ provided by Proposition \ref{tuple} as the solution set of a polynomial equation of low complexity, we will need to employ a well known lemma of Siegel. We state it in the particular case needed and include a proof for completeness.

\begin{lema}[Siegel's lemma for polynomials]
\label{siegel}
Let $D > 0$ be an integer and let $\Sigma$ be some set of integer points in $[N]^d$ with $|\Sigma|<R:=\binom{D+d}{d}$. Then there exists a nonzero polynomial
$$ f= \sum_{i_1, \ldots, i_d} c_{i_1,\ldots,i_d} x^{i_1}x^{i_2}\ldots x^{i_d},$$
of degree at most $D$, vanishing at $\Sigma$, and where the coefficients are integers satisfying the bound
\begin{equation}
\label{bound}
 |c_{i_1,\ldots,i_d}| \le 4\left( R N^{D} \right)^{\frac{|\Sigma|}{R-|\Sigma|}}.
 \end{equation}
\end{lema}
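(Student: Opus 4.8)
The plan is to deduce this from the classical pigeonhole (Thue--Siegel) argument for systems of linear equations. Writing $x^{\alpha} = x_1^{i_1} \cdots x_d^{i_d}$ for a multi-index $\alpha = (i_1, \ldots, i_d)$ with $|\alpha| := i_1 + \cdots + i_d \le D$, there are exactly $R = \binom{D+d}{d}$ such monomials. Asking a polynomial $f = \sum_{|\alpha| \le D} c_{\alpha} x^{\alpha}$ of degree at most $D$ to vanish on $\Sigma$ is exactly asking that the integer vector $(c_{\alpha})_{|\alpha| \le D} \in \mathbb{Z}^{R}$ lie in the kernel of the $|\Sigma| \times R$ matrix $M = (s^{\alpha})_{s \in \Sigma,\, |\alpha| \le D}$. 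Since every $s \in \Sigma$ lies in $[N]^d$ and $|\alpha| \le D$, each entry of $M$ is a positive integer bounded by $N^{D}$. (If $|\Sigma| = 0$ the constant polynomial $f = 1$ works, so we may assume $1 \le |\Sigma| < R$.)

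Next I would run the box-counting step. Set
\[ H := \left\lceil \left( R N^{D} \right)^{\frac{|\Sigma|}{R - |\Sigma|}} \right\rceil, \]
which satisfies $H \ge 1$, and let $B$ be the set of $c \in \mathbb{Z}^{R}$ with $0 \le c_{\alpha} \le H$ for all $\alpha$, so $|B| = (H+1)^{R}$. For $c \in B$ and $s \in \Sigma$, the corresponding coordinate of $Mc$ equals $\sum_{|\alpha| \le D} c_{\alpha} s^{\alpha}$, which lies in the interval $[\,0,\, R N^{D} H\,]$ and hence takes at most $R N^{D} H + 1$ integer values; therefore $M(B)$ is contained in a set of at most $\left( R N^{D} H + 1 \right)^{|\Sigma|}$ integer points.

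Then I would compare the two counts. Using $R N^{D} H + 1 \le R N^{D}(H+1)$ (valid since $R N^{D} \ge 1$) and $H + 1 > (R N^{D})^{|\Sigma|/(R - |\Sigma|)}$, one obtains $(H+1)^{R - |\Sigma|} > (R N^{D})^{|\Sigma|}$, and hence $(H+1)^{R} > \left( R N^{D}(H+1) \right)^{|\Sigma|} \ge \left( R N^{D} H + 1 \right)^{|\Sigma|} \ge |M(B)|$. By the pigeonhole principle there are distinct $c, c' \in B$ with $Mc = M c'$, so $v := c - c'$ is a nonzero integer vector in the kernel of $M$ with $|v_{\alpha}| \le H$ for all $\alpha$. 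Finally, since $(R N^{D})^{|\Sigma|/(R - |\Sigma|)} \ge 1$ we have $H \le (R N^{D})^{|\Sigma|/(R - |\Sigma|)} + 1 \le 2 (R N^{D})^{|\Sigma|/(R - |\Sigma|)} \le 4 (R N^{D})^{|\Sigma|/(R - |\Sigma|)}$, so the polynomial $f$ whose coefficient vector is $v$ meets all the requirements, yielding the bound (\ref{bound}).

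The argument is essentially routine; the only points deserving care are the estimate on the number of lattice points in the image box (using that all entries of $M$ are positive and bounded by $N^{D}$, so the $s$-th coordinate of $Mc$ ranges over an interval of length at most $R N^{D} H$ for every $s \in \Sigma$) and the verification of the counting inequality $(H+1)^{R} > (R N^{D} H + 1)^{|\Sigma|}$, for which the ceiling in the definition of $H$ and the slack between the constants $2$ and $4$ in (\ref{bound}) are exactly what one needs. The trivial cases $|\Sigma| = 0$ and $H \ge 1$ should also be noted.
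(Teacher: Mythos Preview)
Your proof is correct and follows essentially the same pigeonhole argument as the paper's: count coefficient vectors in a box, count possible value-vectors on $\Sigma$, and subtract a colliding pair. The only cosmetic difference is that you use the one-sided box $0 \le c_{\alpha} \le H$ with $H = \lceil (RN^D)^{|\Sigma|/(R-|\Sigma|)} \rceil$, whereas the paper uses the symmetric box $|c_{\alpha}| \le H$ with $H = 2(RN^D)^{|\Sigma|/(R-|\Sigma|)}$ and picks up the extra factor of $2$ from the difference; both routes land on the same bound (\ref{bound}).
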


\begin{proof}
Remember that we write $R={D+d \choose d}$ as in the statement. Let $H$ be a real parameter to be specified and write $\mathcal{R}$ for the set of polynomials in $\mathbb{Z}[x_1,\ldots,x_d]$ of degree at most $D$ and with coefficients of absolute value at most $H$. It is clear that $|\mathcal{R}|=(2H+1)^{R}$. For every element of $\mathcal{R}$, its values at $[N]^d$ are bounded in absolute value by $R H N^D$. It then follows that there are at most $(2 R H N^D+1)^{|\Sigma|}$ configurations of values that it can take over $\Sigma$. But then, if
$$ |\mathcal{R}|=(2H+1)^{R} > \left(2R H N^D+1 \right)^{|\Sigma|},$$
there must exist some pair of elements $f,g \in \mathcal{R}$ coinciding over $\Sigma$, from where it follows that $f-g$ vanishes identically on this set. Since the above inequality is satisfied with 
$$ H=2\left( R N^{D} \right)^{\frac{|\Sigma|}{R-|\Sigma|}},$$
the result follows.
\end{proof}

We can now complete the proof of Theorem \ref{specific}. Let $\mathcal{C} \in S^r$ be a tuple satisfying the conclusion of Proposition \ref{tuple}. Applying Lemma \ref{siegel} with $\Sigma = \mathcal{C}$ and $D$ chosen to satisfy
$$ \left( d! |\mathcal{C}| \right)^{1/d} = (d! r)^{1/d} < D \ll_d r^{1/d},$$
$$ \frac{r}{\binom{D+d}{d}-r} \ll 1,$$
we obtain a nonzero polynomial $P \in \mathbb{Z}[x_1,\ldots,x_d]$, of complexity $\ll_d r^{1/d}$, that vanishes on $\mathcal{C}$.

Let $S'$ be as in the statement of Proposition \ref{tuple}. We will show that $P$ must also vanish at every element of $S'$, thus finishing the proof of Theorem \ref{specific}. To see this, notice that we have $|P(x)| \le N^{c_d r^{1/d}}$ for every $x \in [N]^d$ and some constant $c_d$ depending only on $d$. Hence, we have in particular that for every such $x$, either
\begin{equation}
\label{probe}
 \sum_{p \in \mathcal{P}_I} {\bf 1}_{p|P(x)} \log p \ll_d r^{1/d} \log N \ll_d \eta^{1/d} (\log N)^{\frac{d}{d-\kappa}},
 \end{equation}
or it must be $P(x)=0$ otherwise. Suppose now $x \in S'$. Since $P$ vanishes on $\mathcal{C}$, we know that every prime $p$ for which $(x,\mathcal{C})$ is good will contribute to the left hand side of (\ref{probe}). Therefore, by Proposition \ref{tuple}, the left hand side of (\ref{probe}) is $\gg |I| \gg \tau (\log N)^{\frac{d}{d-\kappa}}$. Theorem \ref{specific} would then follow if we could force $\eta$ and $\tau$ to satisfy
$$ \tau \ge C_2 \eta^{1/d},$$
for some large $C_2$ depending on $d$. Recall that the only other condition needed between these constants is given by (\ref{cond1}). Since $\kappa<d$, it is clear that both inequalities can be satisfied if
$$ \eta \ge \left( C_1 C_2^{\kappa} \right)^{\frac{d}{d-\kappa}},$$
and so this concludes the proof of our result.


\begin{thebibliography}{15}
\bibitem{BGT} E. Breuillard, B. Green, T. Tao, \emph{The structure of approximate groups}, to appear in Publ. Math. Inst. Hautes Études Sci..
\bibitem{CL} E. Croot and V. Lev, \emph{Open problems in additive combinatorics}, in Additive Combinatorics, CRM Proc. Lecture Notes {\bf 43}, 207-233, Amer. Math. Soc., Providence, RI, 2007.
\bibitem{Elsholtz} C. Elsholtz, \emph{The distribution of sequences in residue classes}, Proc. Amer. Math. Soc. {\bf 130} (2002), no. 8, 2247-2250.
\bibitem{G} B. Green, \emph{On a variant of the large sieve}, preprint.
\bibitem{GH} B. Green, \emph{Personal communication}.
\bibitem{GTZ} B. Green, T. Tao, T. Ziegler, \emph{An inverse theorem for the Gowers $U^{s+1}[N]$-norm}, Ann. of Math. (2) {\bf 176} (2012), no. 2, 1231-1372.
\bibitem{H} H. A. Helfgott, \emph{Growth and generation in $SL_2(\mathbb{Z}/p \mathbb{Z})$}, Ann. of Math. (2) {\bf 167} (2008), no. 2,
601-623.
\bibitem{HV} H. A. Helfgott and A. Venkatesh, \emph{How small must ill-distributed sets be?}, Analytic number theory. Essays in honour of Klaus Roth. Cambridge University Press 2009, 224-234.
\bibitem{Kb} E. Kowalski, \emph{The large sieve and its applications: arithmetic geometry, random walks and discrete
groups}. Cambridge Tracts in Math. 175, Cambridge University Press, 2008.
\bibitem{TV} T. Tao and V. Vu, \emph{Inverse Littlewood-Offord theorems and the condition number of random matrices}, Ann. of Math. (2) {\bf 169} (2009), no. 2, 595-632.
\bibitem{W} M. Walsh, \emph{The inverse sieve problem in high dimensions}, Duke Math. J. {\bf 161} (2012), no. 10, 2001-2022.
\end{thebibliography}
\end{document}